\newtheorem{definition}{Definition} 
\numberwithin{definition}{section}
\newtheorem{proposition}[definition]{Proposition} 
\newtheorem{theorem}[definition]{Theorem}
\theoremstyle{definition}
\newtheorem{example}[definition]{Example}
\newcommand{\M}{\mathcal{M}}
\newcommand{\A}{\mathcal{A}}
\newcommand{\U}{\mathcal{U}}
\renewcommand{\min}{\operatorname{min}}
\renewcommand{\max}{\operatorname{max}}
\newcommand{\rk}{\operatorname{rk}}
\newcommand{\ncM}{\mathcal{M}}
\newcommand{\poly}{\chi} 
\begin{document}
\title{The Arboricity Polynomial}
\author{Felix Breuer}
\address{Research Institute for Symbolic Computation \\ Johannes Kepler University Linz}
\email{felix@fbreuer.de}
\author{Caroline J.\ Klivans}
\address{Departments of Applied Mathematics and Computer Science \\Brown University}
\email{klivans@brown.edu}

\begin{abstract} We introduce a new matroid (graph) invariant, the arboricity polynomial.  Given a matroid, the arboricity polynomial enumerates the number of covers of the ground set by disjoint independent sets. 
 We establish the polynomiality of the counting function as a special case of a scheduling polynomial, i.e. both in terms of  quasisymmetric functions and  via Ehrhart theory of the normal fan of the matroid base polytope.
 We show basic properties of the polynomial and demonstrate that it is not a Tutte invariant. Namely, the arboricity polynomial does not satisfy a contraction / deletion recursion. 

\end{abstract}

\maketitle

\section{Introduction}
\label{intro}

The arboricity of a graph is a numerical invariant first introduced by
Nash-Williams and Tutte~\cite{Nash, Tutte}.  Given a simple graph $G = (V,E)$,
the arboricity of $G$, $a(G)$, is defined to be the minimum number of forests
needed to decompose (equivalently cover) the edges of $G$:
$$a(G) = \min_k |\{F_1, \ldots, F_k \, | \, F_i \textrm{ is a forest}, \bigcup F_i = E \} |.$$
The definition of arboricity was extended to matroids by
Edmonds~\cite{Edmonds}.  For a matroid $M$ on ground set $E$, the arboricity of
$M$, $a(M)$, is the minimum number of independent sets needed to
decompose (equivalently cover) the ground set of $M$:

$$a(M) = \min_k |\{I_1, \ldots, I_k \, | \, I_i \textrm{ is an independent set}, \bigcup I_i = E \} |.$$

If $M(G)$ is the graphical matroid associated to a graph $G$, then the arboricity of $M$ recovers the arboricity of $G$, $$a(M(G)) = a(G).$$

The constructive problem associated to arboricity is to explicitly find
a partitioning of a matroid into as few independent subsets as
possible and is known as the matroid partitioning problem, see e.g.
\cite{edmonds2010matroid, edmonds1965transversals, knuth1973matroid}.

Here we expand the consideration of arboricity to the enumerative question 
 of how many
partitions there are of a fixed size.  Specifically, given a matroid $M$ on a ground set
$E$ and a positive integer $k$, define 
$$\mathcal{A}_M(k) = \textrm{\# partitions of $E$ into independent sets with at most $k$
parts.}$$  Our main theorem shows that the counting function
$\mathcal{A}_M(k)$ is a polynomial function in $k$ with integer coefficients, which we call the
{\bf{arboricity polynomial}}.

Interestingly, unlike many polynomials associated to a matroid, the arboricity polynomial is not a Tutte evaluation.  Namely, the arboricity polynomial does {\emph{not}}
generally satisfy a contraction/deletion recursion.  Hence we can not use a standard induction argument to show it is a polynomial.  
Instead, we identify the arboricity polynomial as a special case of a
scheduling polynomial \cite{Scheduling}.  From this perspective, the
arboricity polynomial can be seen either as a polynomial
specialization of a quasisymmetric function or as the union of Ehrhart
polynomials arising from a triangulation of a polytope.  The theory of scheduling
 allows us to associate a natural geometry to arboricity.
More specifically, matroid partitions can be seen as integer points contained in
polyhedral cones defined by the matroid base polytope.

In Section~\ref{arboricty}, we define arboricity, the arboricity polynomial and consider examples.  In Section~\ref{sec:schedule} we review the theory of scheduling polynomials.  Section~\ref{sec:main} establishes arboricity as an instance of scheduling and as such contains the main theorem, that the scheduling counting function is a polynomial.  Section~\ref{sec:main} also demonstrates that the arboricity polynomial is neither a contraction deletion invariant nor valuative with respect to the matroid base polynomial.  

\subsection{Acknowledgements}  The authors thanks Teressa Chambers and Christopher Eur for valuable conversations.  The authors especially thank Christopher Eur for the question of valuativity and working out the example of Section~\ref{value} with the second author during a visit to Carnegie Mellon University.

\section{Arboricity}
\label{arboricty}

Let $[m]$ denote the set $\{1, 2, \ldots, m\}$.
Let $G = (V,E)$ be a finite graph.  An \emph{independent cover} of $G$ is a mapping 
$$g : E \rightarrow [m] \, \, \textrm{such that} \, \, \forall t \in 
[m], \,  g^{-1}(t) \, \, \textrm{is acyclic.}$$ 
 Namely, $g$ partitions the edges of $G$ into a disjoint collection of forests.  Similarly, for any finite matroid $M$ on ground set $E$, an \emph{independent cover} of $M$ is a mapping 
 $$g : E \rightarrow [m] \, \,  \textrm{such that} \, \,  \forall t \in [m],  \, g^{-1}(t) \, \,  \textrm{is independent.}$$

Note that if  $m \geq |E|$, then an independent cover always exists; map each edge (element of the ground set) to a distinct value.  

\begin{definition} The \emph{arboricity} of a graph $G$, $a(G)$, is the minimum value of $m$ necessary for the existence of an independent cover $c : E(G) \rightarrow [m]$.
\end{definition}

\begin{definition} The \emph{arboricity} of a matroid $M$, $a(M)$,  is the minimum value of $m$ neccessary for the existence of an independent cover $c : E(M) \rightarrow [m]$. 
\end{definition}

Graphical arboricity is a special case of matroid arboricity.  
If $M(G)$ is the graphical matroid associated to $G$, then the independent sets of $M$ are precisely the forests of $G$, hence $a(M(G)) = a(G)$.

An appropriate parallel concept to keep in mind is the chromatic number of a graph.  A proper coloring of $G$ is a mapping from the \emph{vertices} of $G$ to a finite set $[m]$ such that the inverse image of any value is an independent set of vertices, i.e. a collection of vertices no two of which span an edge.  The chromatic number is the minimum value of $m$ necessary for the existence of a proper coloring.   In fact, the theory of arboricity is often referred to in the literature as matroid coloring.  \\

\begin{figure}
    \centering
    \includegraphics[width=0.5\linewidth]{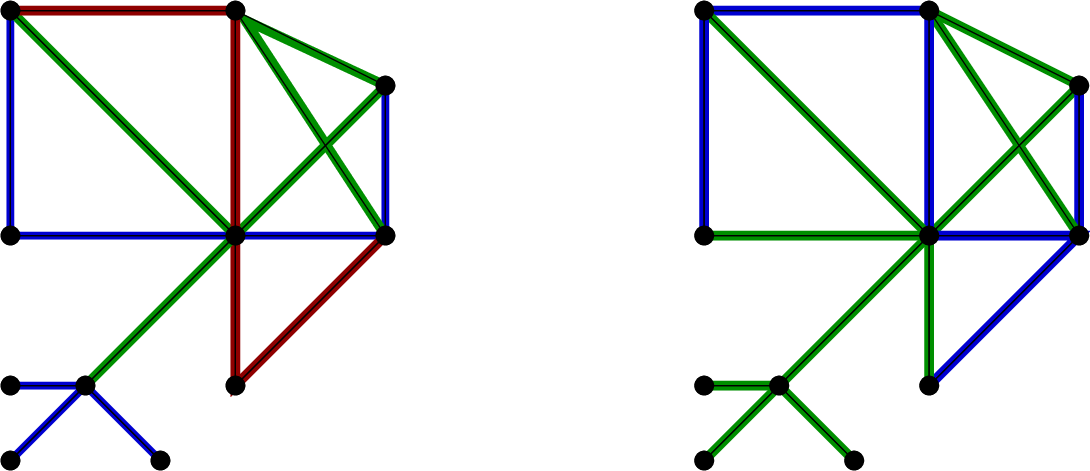}
    \caption{On the left, a graph decomposed into $3$ forests.  On the right, the same graph decomposed into $2$ forests.}
    \label{fig:arborexample}
\end{figure}

\begin{example}
In Figure~\ref{fig:arborexample}, two different decompositions of the same graph $G$ are shown.  On the left, the graph is decomposed into $3$ forests, thus demonstrating that the arboricity is at most three, $a(G) \leq 3$.  On the right, $G$ is decomposed into $2$ forests, thus demonstrating that $a(G) \leq 2$.  Since $G$ contains cycles, the arboricity is also at least two, therefore $a(G) = 2$ for this example.   \end{example}

Early work on arboricity focused on the value $a(M)$ as well as the algorithmic problem of finding a partition of size $a(M)$.    
As shown in 
~\cite{Nash, Tutte} and extended in~\cite{Edmonds}, the arboricity $a(M)$ is given explicitly 
by the formula:
$$ a(M) = \max\limits_{{X\subseteq E}} \bigg{\lceil} { \frac{|X|}{\rk(X)}} \bigg{\rceil}, $$
where $X$ ranges over all possible subsets of the ground set and $\rk(X)$ is the matroidal rank of $X$.\\

Much work has also been done investigating specializations of arboricity.
For example, the papers~\cite{algor1989star, aoki1990star, gonccalves2009star} consider \emph{star arboricty} which is
the minimum number of star forests needed to cover the edges of a
graph.  Similarly, the works~\cite{Hararylinear, alon1988linear, Foxlinear} consider \emph{linear arboricity}
which is the minimum number of paths needed to cover the edges of a
graph.  The longstanding linear arboricity conjecture has generated novel recent work connecting arboricity to various graph theoretic methods.  It postulates that 
$$\textrm{linear } a(G) \leq \ \left\lceil\frac{\Delta}{2}\right\rceil$$
where $\Delta$ is the maximal degree of $G$~\cite{la,Foxlinear}
  Specialization also occurs at the structural level of the graph;
i.e. considering arboricity for planar graphs or degree regular
graphs, see for example~\cite{Wuplanar, Guldanregular,  Enomotoregular}.\\

Returning to coloring, the primary parallel we will draw is to the chromatic polynomial of a graph $\chi_G(k)$.  The chromatic polynomial  is the counting function which records the number of proper colorings of $G$ using at most $k$ colors.

The arboricity polynomial is defined similiarly:

\begin{definition}
For a matroid $M$, on ground set $E$,  the \emph{arboricity polynomial} $\mathcal{A}_M(k)$ is the counting function which records the number of independent covers $g : E \rightarrow [k]$.
  \end{definition}
Hence $\mathcal{A}_G(k)$ records the number of labeled partitions of the edge set of $G$ into independent sets with at most $k$ parts; just as $\chi_G(t)$ records the number of labeled partitions of the vertex set of $G$ into independent sets with at most $t$ parts.  The arboricity of a graph $G$ with edge set $E$ is that of its graphical matroid or equivalently the counting function which records the number of independent covers $g: E \rightarrow [k]$.

We will justify calling the arboricity function a polynomial in the next section.  A typical way to show that the chromatic function $\chi_G(t)$ is a polynomial is via a contraction--deletion recursion and induction.  We require different techniques as arboricity does not in general satisfy a contraction--deletion recursion. \\

Although quite natural, the arboricity counting function has been overlooked until now perhaps precisely because it is not a deletion--contraction invariant.  Arboricity is however particularly natural in the geometric view of matroids and  matroid polytopes, a viewpoint of strong recent importance.  \\

Let us consider a few examples before moving to the general theory.  
\begin{example}
\label{ex:allfree}
Let $M$ be the free matroid on a ground set of size $n$, i.e. all subsets of the ground set are independent.  Then $\mathcal{A}_M(k) = k^n.$  Equivalently, let $G$ be an acyclic graph on $n$ edges (whether it is a tree or forest is not relevant).  Then any partition of the edges yields an independent cover, the graphical matroid $M(G)$ is a free matroid, and $$\mathcal{A}_{M(G)}(k) = k^n.$$    

\end{example}

\begin{example}
\label{ex:c3}
    Consider the graph which is the three cycle $C_3$.  The graphical matroid  $\M(C_3$) has a ground set of size three, and all subsets of size one or two are independent.  We will derive the general form of $\A_{\M(C_n)}(k)$ in Proposition~\ref{prop:cycles}.  For now, direct calculation gives that 
    $$ \A_{\M(C_3)}(k) = k^3 - k.$$
    
\pagebreak   
     
        Therefore 
    \begin{align*}
    \mathcal{A}_{M(C_3)}(1) &= 0 \\ 
    \mathcal{A}_{M(C_3)}(2) &= 3*2 = 6 \\
    \mathcal{A}_{M(C_3)}(3) &= 27-3 = 24. \\
\textrm{Deleting any edge $e$ gives a path of length} & \textrm{ $2$}  \textrm{ which is acyclic.  Therefore \color{white}{{ABCD}}}\\
\mathcal{A}_{M(C_3 - e)}(1) &= 1 \\ 
\mathcal{A}_{M(C_3 - e)}(2) &= 2^2 = 4 \\
\mathcal{A}_{M(C_3 - e)}(3) &= 3^2 = 9. \\
\textrm{Contracting any edge $e$ gives a double } &  \textrm {edge which} \textrm{ has no independent } \\
\textrm{ covers of size $1$, two of size $2$ and six } & \textrm{of size $3$:}   \\
\mathcal{A}_{M(C_3 / e)}(1) &= 0 \\
\mathcal{A}_{M(C_3 / e)}(2) &= 2 \\
\mathcal{A}_{M(C_3 / e)}(3) &= 6.
\end{align*}\\

 Therefore, already for the cycle of length $3$,  we see that the arboricity polynomial may not satisfy the contraction / deletion recursion:
$$\A_{\M(C_3)}(k)  \neq \mathcal{A}_{M(C_3 - e)}(k) \pm \mathcal{A}_{M(C_3 / e)}(k), $$
see also Section~\ref{sec:condel}.

\end{example}

Next we give an example of a matroid that is not isomorphic to a graphical matroid for any graph.

\begin{example}

Let $\M = \mathcal{U}_{r,n}$ be the uniform matroid of rank $r$ on a ground set of size $n$.  In general, $\M$ is a realizable but not graphical matroid. Again direct calculation gives that

\begin{align*}
    \A_{\mathcal{U}_{2,3}}(k) &= k^3 - k \\
    \A_{\mathcal{U}_{2,4}}(k) &= k^4 - 4k^2 + 3k \\
    \A_{\mathcal{U}_{2,5}}(k) &= k^5 - 10k^3 + 15k^2 - 6k.
    \end{align*}

The matroid $\mathcal{U}_{2,3}$ is isomorphic to  the graphical matroid $\M(C_3)$, hence the agreement of arboricity polynomials. The matroids  $\mathcal{U}_{2,4}$ and $\mathcal{U}_{2,5}$ are not graphical.

\end{example}
General properties of the arboricity polynomial are discussed in Section~\ref{sec:main}.

\section{Scheduling}
\label{sec:schedule}

Arboricity can be seen as a special case of a more general theory of \emph{scheduling} which we review in this section.  
In~\cite{Scheduling}, the authors considered {scheduling problems} of the
following form.

\begin{definition}A \emph{scheduling problem} on $n$ items is a Boolean formula
$S$ over atomic formulas $x_i \leq x_j$ for $i,j \in [n]$.  
A \emph{$k$-schedule} solving $S$ is an assignment  $\omega
:[n] \rightarrow [k]$ such that $S(\omega)$ is true. 
\end{definition}
The $n$ items
are interpreted as tasks that need to be completed.  The atomic
formulas specify constraints on the order in which the tasks can be completed.

\begin{example}
Consider the scheduling problem $S$ on $5$ items:
$$ S = (x_3 \leq x_4) \land (x_5 \leq x_1) \land (x_4 \leq x_3). $$
In words, the constraints dictate that tasks $3$ and $4$ need to occur at the same time and that task $5$ can not occur after task $1$.

If $k = 3$, i.e. there are $3$ possible time slots, then the $k$-schedules include:
$$(1,3,2,2,1), (3,1,3,3,2), (1,1,1,1,1), (2,1,2,2,1)$$
and do not include:
$$(2,3,2,2,1), (3,2,2,3,2), (1,2,3,2,3), (1,1,2,1,1). $$

\end{example}

It is possible that $k > n$, the number of time slots might be larger than the number of items.  This makes the scheduling problem easier to satisfy.

In the example above, if $k = 8$ then the $k$-schedules satisfying $S$ also include:
$$(8,2,6,6,8), (7,1,1,1,5), (7,1,1,1,6), (4,8,6,6,3). $$

We interpret $k$-schedules (solutions to a scheduling problem) as integer points in $\mathbb{Z}^n$. 
Importantly, note that whether or not an integer point ${\bf{x}}$ is a solution
to a scheduling problem $S$ only depends on the relative ordering of
the coordinates of ${\bf x}$.

For example, if the point $(4,5,5,2)$ satisfies a
scheduling problem $S$ then so does $(2,3,3,1)$ and all integer points
$(x_1, x_2, x_3, x_4)$ such that $x_4 < x_1 < x_2 = x_3$.   The value $k$  bounds the entries componentwise.

Considering the relative ordering of the coordinates allows us to associate solutions to scheduling problems combinatorially as collections of ordered set partitions and geometrically in terms of the Braid arrangement, as explained next.

\begin{definition}
    The \emph{Braid arrangement} $\mathcal{B}_n \in \mathbb{R}^n$ is the hyperplane arrangement consisting of all hyperplanes of the form 
    $ x_i = x_j $  for all $i,j \in [n].$
\end{definition}

For any integer point $\bf{x} \in \mathbb{R}^n$ its relative position to the Braid arrangement consists of the information of whether $\bf{x}$:

\begin{itemize}
    \item  
lies directly on a hyperplane: ($x_i = x_j$ or $x_i \neq x_j$) 
    \item or 
 which side of the hyperplane it lies on ($x_i < x_j$ or $x_i > x_j$).  
\end{itemize}

Any cone of the Braid arrangement $\mathcal{B}_n$ represents the
collection of points in $\mathbb{R}^n$ whose relative positions with respect to
all hyperplanes of $\mathcal{B}_n$ are the same. For example,
continuing from above, all $\bf{x} \in \mathbb{R}^4$ s.t.  $$x_4 < x_1
< x_2 = x_3$$ form a three dimensional cone of the Braid arrangement
$\mathcal{B}_4$ which we represent by the ordered set
partition: $$4|1|23.$$ This three dimensional cone is contained in the
closure of two full dimensional regions of $\mathcal{B}_4$, the four
dimensional cone of points s.t.  $$x_4 < x_1<x_2<x_3 \,
\longleftrightarrow \, 4|1|2|3$$ and the cone $$x_4<x_1<x_3<x_2 \,
\longleftrightarrow \, 4|1|3|2.$$
See~\cite{Scheduling} for more discussion on the association of ordered set partitions and the Braid arrangement.  \\

Next consider the parameter $k$ appearing in $k$-schedules.  
The points in $\mathbb{R}^n$ all of whose coordinates are bounded by a fixed value $k$.  Geometrically such points form an $n$-dimensional cube of side length $k$.  

Bringing these two ideas together, {\em the solutions to a satisfiable scheduling problem can be seen as the integer points in a union of Braid cones intersected with a cube,} see Figure~\ref{fig:schedule}.

\begin{figure}
    \centering
\includegraphics[width=0.8\linewidth]{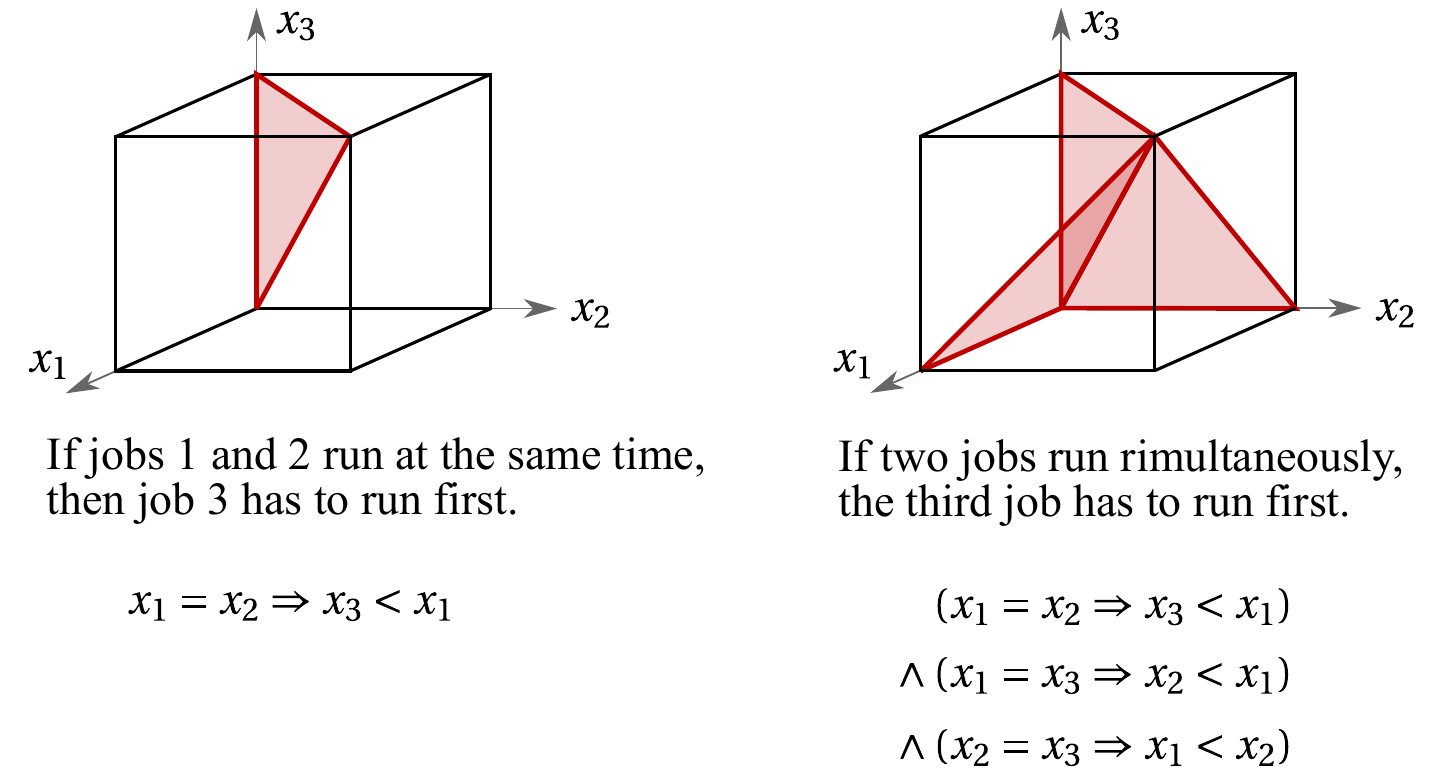}
    \caption{Scheduling Geometry.  Suppose the cubes shown have side length $k$.  The $k$-schedules of the described scheduling problems are the integer points inside the cube that do not lie on any of the shaded regions. }
    \label{fig:schedule}
\end{figure}

\begin{example}
    
Classical graph (vertex) coloring can naturally be seen in the framework of scheduling.  A proper  $t$-coloring of a graph $G$ is a map: $$ c(G): V(G) \rightarrow [t], \, \, \, \,   c(v_i) \neq c(v_j), \, \, \, ij \in E(G)$$
The corresponding scheduling problem is a conjunction over all edges of the graph.  $$S = \bigwedge_{ij \in E(G)} x_i \neq x_j$$

Geometrically, solutions (colorings) 
are integer points 
not on the collection of planes $\{(x_i =x_j), \, \, \, ij \in E(G)\}$.   Given a graph $G$, the collection of hyperplanes corresponding to the edges of $G$ is known as the graphical hyperplane arrangement corresponding to $G$ and is always a subarrangement of the Braid arrangement.  The Braid arrangement $\mathcal{B}_n$ can be thought of as the graphical hyperplane arrangement associated to the complete graph $K_n$.

\begin{figure}[h]
\begin{centering}
\includegraphics[angle=270,width=0.6\linewidth]{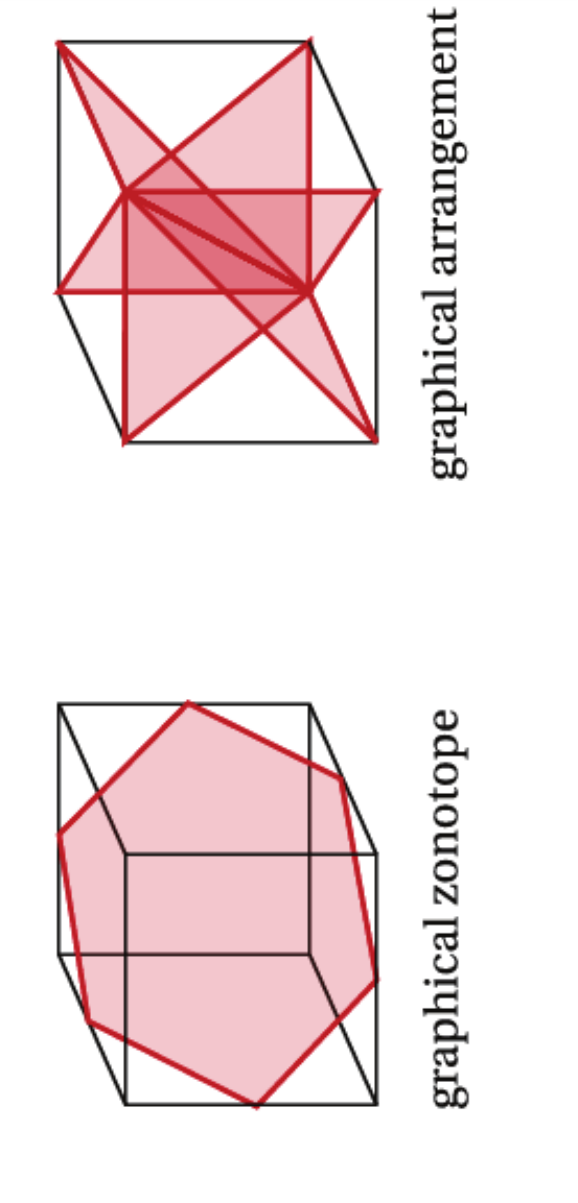}
\caption{The graphical zonotope of the complete graph $K_3$ and the corresponding graphical arrangement.} 
\label{fig:normalfan}
\end{centering}
\end{figure}

Relative interiors of the graphical arrangement are the interiors of the maximal cones of the normal fan of the corresponding graphical zonotope, see Figure~\ref{fig:normalfan} and~\cite{Scheduling}.  Therefore, the geometric perspective of coloring coming from scheduling  can be summarized as: proper colorings of a graph correspond to the non-negative integer points lying in the relative interiors of maximal cones of the normal fan of the corresponding graphical zonotope. 
\end{example}

Given a scheduling problem $S$, counting the number of $k$-schedules is now a question of counting integer points in certain convex regions.   Such enumerative questions are generally quite difficult and have a rich literature, see e.g.~\cite{AB,beck}. 

In,~\cite{Scheduling} the authors use two sets of techniques to understand the
integer points corresponding to $k$-schedules -- Ehrhart theory and
the theory of quasisymmetric functions in non commuting variables,
NCQSym.

Ehrhart theory is directly concerned with counting integer points in
convex regions.  The connection to NCQSym is via ordered set
partitions and is more natural for the arboricity setting.  We will not give a full treatment here and refer the reader to~\cite{bergeron2008hopf}. 

The
algebra NCQSym has a basis known as the monomial basis which is
indexed by ordered set partitions (it specializes to the monomial basis for QSym
which is indexed by compositions).  Let $\{x_1, x_2, \ldots \}$ be a collection of non-commuting
variables.  Given $a \in \mathbb{Z}^n$, let $\Delta(a)$
be the ordered set partition $(\Delta_1 | \Delta_2 | \ldots | \Delta_k)$ such that 
$a$ is constant on each set $\Delta_i$
and satisfies $a|_{\Delta_i} < a|_{\Delta_{i+1}}$ for all $1 \leq i \leq k$.

Let $\Phi$ be an ordered set partition.  The monomial nc-quasisymmetric function indexed by $\Phi$ is:
$$\ncM_{\Phi} := \sum_{a \in \mathbb{N}^n \, \Delta(a) = \Phi} {\bf{x}}_{a},$$
where ${\bf{x}}_{a}$ is the monomial ${\bf{x}}_{a} = x_{a_1}x_{a_2}\ldots x_{a_n}$.   $\ncM_{\Phi}$ therefore encapsulates solutions to a scheduling problem of fixed relative order.   In~\cite{Scheduling}, using this connection and known specializations of quasisymmetric functions, the authors show the following.

\begin{theorem}\cite[Theorem 2.4]{Scheduling}
\label{fbasis}
Given a scheduling problem $S$ on $n$ items, the scheduling counting function,
$\poly_S(k)$
is a polynomial in $k$ of degree at most $n$, the \emph{scheduling polynomial} of $S$. 

\begin{equation}
\label{eqn:fbasis}
  \poly_S(k) = \sum_{i=1}^n f_i\binom{k}{i} = \sum \M_{\Phi}(\bf{1}^k),
\end{equation}
 where the coefficients $f_1,\ldots,f_n$ 
are non-negative integers counting the number of ordered set partitions $\Phi$ with $i$ non-empty blocks such that $S(\Phi)$ holds.
\end{theorem}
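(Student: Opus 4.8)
The plan is to establish the identity \eqref{eqn:fbasis} by a direct bijective/counting argument organized around ordered set partitions, and then deduce polynomiality as an immediate corollary. First I would observe that a $k$-schedule is an assignment $\omega:[n]\to[k]$, and that by the key remark preceding the theorem, whether $S(\omega)$ holds depends only on the relative order of the coordinates of $\omega$, i.e. only on the ordered set partition $\Delta(\omega)$. So I would partition the set of all $k$-schedules according to the ordered set partition they induce: a $k$-schedule $\omega$ with $\Delta(\omega)=\Phi$ exists only if $S(\Phi)$ holds (interpreting $S$ on an ordered set partition via the induced relative order), and in that case the number of $\omega$ with $\Delta(\omega)=\Phi$ depends only on the number of blocks of $\Phi$.

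Next I would make that last count explicit. If $\Phi$ has $i$ non-empty blocks, then choosing $\omega$ with $\Delta(\omega)=\Phi$ amounts to choosing $i$ distinct values from $[k]$ and assigning them to the blocks \emph{in increasing order} — the order of the blocks is already fixed by $\Phi$ — so there are exactly $\binom{k}{i}$ such $\omega$. Summing over all ordered set partitions $\Phi$ of $[n]$ for which $S(\Phi)$ holds, and grouping by the number of blocks $i$, gives
\begin{equation*}
  \poly_S(k) = \sum_{i=1}^{n} f_i \binom{k}{i},
\end{equation*}
where $f_i$ is the number of ordered set partitions of $[n]$ with $i$ non-empty blocks satisfying $S$. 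Each $f_i$ is manifestly a non-negative integer, and the sum ranges over $i$ from $1$ to $n$ since an ordered set partition of $[n]$ has between $1$ and $n$ blocks. Since each $\binom{k}{i}$ is a polynomial in $k$ of degree $i\le n$, $\poly_S(k)$ is a polynomial in $k$ of degree at most $n$. The second equality in \eqref{eqn:fbasis}, $\poly_S(k)=\sum \M_\Phi(\mathbf{1}^k)$, then follows from the definition of $\M_\Phi$: specializing the first $k$ variables to $1$ and the rest to $0$ counts exactly the monomials $\mathbf{x}_a$ with $a\in[k]^n$ and $\Delta(a)=\Phi$, which is $\binom{k}{i}$ when $\Phi$ has $i$ blocks; summing over the $\Phi$ with $S(\Phi)$ recovers the same total.

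The substantive point — and the only place requiring care — is the claim that $S(\omega)$ depends only on $\Delta(\omega)$, together with the bookkeeping that turns "choose values for the blocks" into the clean factor $\binom{k}{i}$ rather than $\binom{k}{i}i!$; the ordered-set-partition formalism already records the block order, so no further symmetry factor appears, but this must be stated carefully. The relative-order invariance itself is essentially immediate from the fact that $S$ is a Boolean combination of atomic formulas $x_i\le x_j$, each of whose truth value is determined by the relative order of $x_i$ and $x_j$; one checks this by structural induction on the Boolean formula $S$. I expect the main obstacle to be purely expository: making precise the notion "$S(\Phi)$ holds" for an ordered set partition $\Phi$ (namely, $S$ evaluated at any — equivalently, every — integer point $a$ with $\Delta(a)=\Phi$) and confirming well-definedness, after which the enumeration is routine.
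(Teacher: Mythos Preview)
Your argument is correct. Note, however, that the paper does not supply its own proof of this theorem: it is quoted as \cite[Theorem~2.4]{Scheduling}, and the surrounding text indicates that in the cited reference the result is obtained via the NCQSym machinery, by expressing the generating function for solutions in the monomial basis $\mathcal{M}_\Phi$ and then applying the known principal specialization $\mathbf{x}\mapsto\mathbf{1}^k$ of quasisymmetric functions. Your approach is the direct combinatorial one: you bypass the quasisymmetric-function formalism and count $k$-schedules by first grouping them by their ordered set partition $\Delta(\omega)$ and then observing that each fiber has size $\binom{k}{i}$. This is more elementary and entirely self-contained, at the cost of not situating the result within the NCQSym framework that the paper later uses implicitly (e.g., in the proof of Proposition~\ref{prop:cycles}). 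The two arguments are of course equivalent---your final paragraph essentially reconstructs the specialization $\mathcal{M}_\Phi(\mathbf{1}^k)=\binom{k}{i}$ by hand---so the difference is one of packaging rather than substance.
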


\section{Arboricity as Scheduling}
\label{sec:main}

In this section we reframe arboricity as a scheduling problem.  
Given a matroid $M$ on a finite ground set
$E$, an independent cover is an ordered set partition of $E$ such
that no block contains a circuit of $M$. Let $\mathcal{C}$ denote the set of circuits of $M$, then the arboricity scheduling boolean function is
\begin{equation}
  \label{eq:Arbor}
  A
  _M = \bigwedge_{C \in \mathcal{C}} \neg(x_{i_1} = x_{i_2} = \cdots = x_{i_m}),
  \end{equation}
where the conjunction runs over all circuits $C \in \mathcal{C}$,  $C = \{i_1, i_2, \ldots, i_m\}$.  In terms of scheduling, the collection of jobs which may start at any
 fixed time can not be dependent.

Applying Theorem~\ref{fbasis}, we have the following result.  

\begin{theorem}\label{thm:apoly} Let $M$ be a matroid on ground set $E$ with $|E|=n$.  The counting function
    ${\mathcal{A}}_{M}(k)$ is a polynomial function in $k$ of degree at most $n$ with integer coefficients, ${\mathcal{A}}_{M}(k) \in \mathbb{Z}[x_1, x_2, \ldots, x_n].$ 
\end{theorem}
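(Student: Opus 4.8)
The plan is to realize $\mathcal{A}_M(k)$ as the scheduling counting function of the Boolean formula $A_M$ defined in equation~\eqref{eq:Arbor} and then invoke Theorem~\ref{fbasis} directly. First I would identify the ground set $E$ with $[n]$ via some fixed labeling, so that an assignment $g\colon E \rightarrow [k]$ is the same thing as a map $\omega\colon [n] \rightarrow [k]$, i.e. a candidate $k$-schedule. The crux of the argument is the equivalence: $g$ is an independent cover of $M$ if and only if $A_M(\omega)$ is true. This follows from the standard matroid fact that a subset $F \subseteq E$ is independent if and only if $F$ contains no circuit of $M$. Applying this to each fiber $F_t = g^{-1}(t)$: the fiber $F_t$ fails to be independent exactly when it contains some circuit $C = \{i_1,\dots,i_m\}$, which happens exactly when $g$ is constant equal to $t$ on all of $C$, i.e. when $x_{i_1} = x_{i_2} = \cdots = x_{i_m}$ under $\omega$; so some fiber is dependent iff $\omega$ violates some clause $\neg(x_{i_1} = \cdots = x_{i_m})$, iff $A_M(\omega)$ is false. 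Hence the $k$-schedules solving $A_M$ are precisely the independent covers $g\colon E \rightarrow [k]$, and therefore $\poly_{A_M}(k) = \mathcal{A}_M(k)$ for every positive integer $k$.

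With this identification in hand, Theorem~\ref{fbasis} applies to the scheduling problem $A_M$ on $n$ items: it tells us that $\poly_{A_M}(k)$ agrees, for all positive integers $k$, with a polynomial in $k$ of degree at most $n$, namely $\sum_{i=1}^n f_i \binom{k}{i}$, where $f_i$ is the (nonnegative integer) number of ordered set partitions $\Phi$ of $[n]$ with $i$ nonempty blocks such that $A_M(\Phi)$ holds. Since each $\binom{k}{i}$ is an integer-valued polynomial of degree $i$ with leading coefficient $1/i!$ but, more to the point, is a $\mathbb{Z}$-linear combination of the monomials $k^0,\dots,k^i$ with... actually I should be careful here: $\binom{k}{i}$ has rational, not integer, coefficients as a polynomial in $k$. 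To get integer coefficients for $\mathcal{A}_M(k)$ I would argue separately, e.g. via finite differences: since $\mathcal{A}_M(k)$ takes integer values at all nonnegative integers $k$ and agrees with a polynomial there, and since $\mathcal{A}_M(0) = 0$ (no cover with zero parts when $E \neq \emptyset$; and $\mathcal{A}_M(k)=1$ trivially when $E=\emptyset$), one can check the coefficients are integral — but in fact the cleanest route is to observe directly that $\mathcal{A}_M(k)$ is an alternating sum built from inclusion–exclusion over circuits, or simply to note that the claim "integer coefficients" can be extracted from a suitable basis change. Let me instead plan to deduce integrality from the inclusion–exclusion expression: $\mathcal{A}_M(k) = \sum_{S \subseteq \mathcal{C}} (-1)^{|S|} N_S(k)$ where $N_S(k)$ counts maps $g$ that are constant on each circuit in $S$, and each $N_S(k)$ is $k$ raised to the number of blocks of the partition of $E$ generated by $S$, hence a monic monomial in $k$ with integer coefficient.

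So the key steps, in order, are: (1) fix a labeling $E \cong [n]$ and match maps $g$ with schedules $\omega$; (2) prove the logical equivalence "$g$ is an independent cover $\Leftrightarrow$ $A_M(\omega)$ holds" using the circuit characterization of independence; (3) conclude $\mathcal{A}_M(k) = \poly_{A_M}(k)$ and invoke Theorem~\ref{fbasis} for polynomiality and the degree bound $n$; (4) upgrade "polynomial" to "polynomial with integer coefficients," either via an inclusion–exclusion over circuits (each term a monic power of $k$) or via a finite-difference / integer-valued-polynomial argument using that $\mathcal{A}_M$ takes integer values on $\ZZ_{\geq 0}$. I expect step (2) to be entirely routine, and the only genuine subtlety to be step (4): Theorem~\ref{fbasis} as quoted gives the $\binom{k}{i}$-expansion, whose naive expansion has denominators, so one must supply the short extra argument — most transparently the circuit inclusion–exclusion — to see that the coefficients in the power basis are in fact integers. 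The notation $\mathcal{A}_M(k) \in \mathbb{Z}[x_1,\dots,x_n]$ in the statement is presumably a typo for $\mathbb{Z}[k]$, and I would simply phrase the conclusion as: $\mathcal{A}_M(k) \in \ZZ[k]$ with $\deg \mathcal{A}_M \le n$.
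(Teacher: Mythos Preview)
Your steps (1)--(3) match the paper's proof exactly: identify independent covers with $k$-schedules for the Boolean formula $A_M$ of~\eqref{eq:Arbor} and apply Theorem~\ref{fbasis} to obtain polynomiality of degree at most $n$. Where you diverge is step (4), the integrality of the coefficients. The paper's argument is shorter and stays entirely within the scheduling framework: whether an ordered set partition $\Phi$ satisfies $A_M$ depends only on the underlying \emph{unordered} partition (permuting the blocks of a solution again gives a solution), so the count $f_i$ of ordered solutions with $i$ blocks is divisible by $i!$, and hence each summand $f_i\binom{k}{i}$ already lies in $\ZZ[k]$. Your inclusion--exclusion over subsets $S\subseteq\mathcal{C}$ of circuits is also correct --- each term is $\pm k^{b(S)}$ with $b(S)$ the number of blocks of the partition of $E$ generated by $S$ --- and in fact it delivers polynomiality, the degree bound, and integrality all at once, so it would let you bypass Theorem~\ref{fbasis} entirely if you wished; the paper's route, by contrast, ties the result into the general scheduling machinery it is developing. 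One caution: the finite-difference/integer-valued alternative you float and then set aside would \emph{not} suffice, since integer-valued polynomials (e.g.\ $\binom{k}{2}$) need not have integer coefficients; you were right to abandon it.
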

\begin{proof}
Theorem~\ref{fbasis} applied to Equation~\ref{eq:Arbor} gives the
polynomiality of ${\mathcal{A}}_{M}(k)$.  The claim of integer
coefficients follows from the symmetry of the arboricity problem.  If
an ordered set partition $\Phi = \Phi_1|\Phi_2| \ldots |\Phi_{\ell}$,  satisfies $A_M$ then any
ordered set partition $\Phi_{\sigma(1)}|\Phi_{\sigma(2)}| \ldots |
\Phi_{\sigma(\ell)}$ resulting from permuting the blocks of $\Phi$ will also
satisfy $A_M$.  Therefore each $f_i$ in Equation~\ref{eqn:fbasis} will be a multiple of $(i!)$.
  \end{proof}

Next we explore some basic properties of the arboricity polynomial. 

\begin{proposition}\label{aprop} Let $\mathcal{M}$ be a matroid on ground set $E$ with $|E|=n$.
    \begin{enumerate}
        \item The constant term of $\mathcal{A}_{{\mathcal{M}}}(k)$ is equal to zero; i.e. $k$ divides $\A_{\M}(k)$.
        \item The sum of the coefficients  of $\mathcal{A}_{\mathcal{M}}(k)$ is equal to $0$ unless $\mathcal{M}$ is the free matroid in which case the sum is equal to $1$. 
        \item $\mathcal{A}_{\mathcal{M}}(k)$ is monic of degree equal to the size of the ground set of $\mathcal{M}$. 
    \end{enumerate}
\end{proposition}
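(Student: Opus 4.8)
The plan is to prove each of the three parts using the expansion from Theorem~\ref{fbasis}, namely $\mathcal{A}_M(k) = \sum_{i=1}^n f_i \binom{k}{i}$, where $f_i$ counts ordered set partitions of $E$ into $i$ nonempty blocks each of which is independent in $M$. For part (a), I would evaluate at $k=0$: every binomial $\binom{0}{i}$ vanishes for $i \geq 1$, so $\mathcal{A}_M(0) = 0$, which says the constant term is zero, equivalently $k \mid \mathcal{A}_M(k)$. (One should note that the interpretation of $\mathcal{A}_M(0)$ as "number of covers with $0$ labels" is vacuously $0$ as long as $E \neq \emptyset$; if $E = \emptyset$ the polynomial is the constant $1$ and the statement should be read with that degenerate case set aside, or the matroid assumed loopless/nonempty.)

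For part (b), the sum of the coefficients of $\mathcal{A}_M(k)$ is $\mathcal{A}_M(1)$, the number of independent covers using a single label. Such a cover maps all of $E$ to the one label, so it exists if and only if $E$ itself is independent, i.e. if and only if $M$ is the free matroid; in that case there is exactly one such cover. Hence $\mathcal{A}_M(1) = 1$ if $M$ is free and $0$ otherwise, which is exactly the claim.

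For part (c), I would argue that the degree is exactly $n = |E|$ with leading coefficient $1$. From the binomial expansion, the degree is the largest $i$ with $f_i \neq 0$, and the coefficient of $k^n$ in $f_n \binom{k}{n}$ is $f_n / n!$. So it suffices to show $f_n = n!$: an ordered set partition of $E$ into $n$ nonempty blocks is precisely a linear ordering of the $n$ singleton blocks, and every singleton $\{e\}$ is independent (assuming $M$ has no loops — again the degenerate case of a loop, where $\mathcal{A}_M \equiv 0$, should be excluded or the matroid assumed loopless, as is implicit since otherwise no independent cover exists at all). There are exactly $n!$ such orderings, all of which satisfy $A_M$, so $f_n = n!$ and the leading coefficient is $1$. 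Since all $f_i \geq 0$, no cancellation can reduce the degree below $n$.

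The main obstacle is not mathematical depth but bookkeeping about degenerate cases: loops (which make $\mathcal{A}_M$ identically zero and break parts (a) and (c) as literally stated) and the empty ground set (which makes $\mathcal{A}_M \equiv 1$ and breaks parts (a) and (b)). I would handle this by a sentence at the start of the proof noting that $M$ is assumed loopless with $E \neq \emptyset$ (otherwise the arboricity polynomial is degenerate and the statements are to be read accordingly), after which parts (a)--(c) follow cleanly from evaluating at $k = 0$, at $k = 1$, and by computing $f_n$ respectively.
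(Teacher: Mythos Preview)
Your proposal is correct and follows essentially the same approach as the paper: evaluate at $k=0$ for (a), at $k=1$ for (b), and compute $f_n=n!$ in the expansion $\mathcal{A}_M(k)=\sum_i f_i\binom{k}{i}$ for (c). Your treatment is in fact slightly more careful than the paper's, since you flag the degenerate cases of loops and $E=\emptyset$ that the paper silently assumes away.
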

\begin{proof} $\,$\\
  \begin{enumerate}    
        \item There are no independent covers of size zero hence $\mathcal{A}_{\M}(0) = 0$. 
        \item There are no independent covers of size one unless the entire ground set is independent, in which case we have the free matroid of Example~\ref{ex:allfree}.  
        \item Consider the presentation of $\mathcal{A}_{\mathcal{M}}(k)$ as given in Theorem~\ref{fbasis}.  The only term which potentially contributes to the coefficient of $k^n$ is the summand when $i = n$.  There is precisely one \emph{non-ordered} partition of $[n]$ into $n$ non-empty blocks and $n!$ ordered set partitions of $[n]$ into $n$ non-empty blocks.  All such partitions yield an independent cover of $E$. Therefore the leading term is determined by the quantity $n! {k \choose n}$.
    \end{enumerate}
\end{proof}

The forthcoming work \cite{GRWC} explores other general properties of arboricity polynomials.  For example, the arboricity polynomial of a direct sum of matroids is equal to the product of the polynomials of the matroids.

\subsection{Cycle Graphs}

Here we compute the general form of the arboricity polynomial for all cycle graphs. We explore the ideas and properties established above through this family. 

\begin{proposition}\label{prop:cycles} Let $M(C_n)$ be the graphical matroid of the cycle graph on $n$
vertices.
The scheduling polynomial of $M(C_n)$ is
$${\mathcal{A}_{M(C_n)}}(k) = k^n - k. $$

\end{proposition}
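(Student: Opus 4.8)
The plan is to count independent covers $g : E(C_n) \to [k]$ directly. The cycle graph $C_n$ has $n$ edges, and its graphical matroid has a single circuit: the set of all $n$ edges. Thus a map $g : E \to [k]$ is an independent cover precisely when no color class contains the whole edge set, i.e.\ precisely when $g$ is \emph{not} constant. (Any proper subset of the edges of $C_n$ is a forest, hence independent, so the only way a fiber $g^{-1}(t)$ can fail to be independent is $g^{-1}(t) = E$.)

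First I would record that there are $k^n$ functions $E \to [k]$ in total. Second, I would observe that the functions that fail the independent-cover condition are exactly the constant functions, of which there are $k$ (one for each color in $[k]$). Subtracting gives $\mathcal{A}_{M(C_n)}(k) = k^n - k$, which is the claimed formula. Since this is a polynomial identity that holds for every positive integer $k$, it holds as an identity of polynomials; alternatively one may invoke Theorem~\ref{thm:apoly} to know a priori that $\mathcal{A}_{M(C_n)}(k)$ is a polynomial, so agreement on infinitely many values forces equality.

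Optionally, to match the scheduling/ordered-set-partition language of Theorem~\ref{fbasis}, I could instead compute the coefficients $f_i$: an ordered set partition of $[n]$ satisfies $A_{M(C_n)}$ iff it has at least two nonempty blocks, so $f_1 = 0$ and $f_i$ equals the total number of ordered set partitions of $[n]$ into $i$ nonempty blocks for $2 \le i \le n$. Then $\mathcal{A}_{M(C_n)}(k) = \sum_{i=1}^n f_i \binom{k}{i}$ equals $\big(\sum_{i=1}^n (\text{ordered set partitions into } i \text{ blocks})\binom{k}{i}\big) - f_1^{\text{all}}\binom{k}{1}$ where the full sum is $k^n$ (counting all functions by the relative order of their fibers) and the removed $i=1$ term is $k$; this reproduces $k^n - k$.

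There is essentially no obstacle here: the only thing to be careful about is the identification of circuits of $M(C_n)$ — namely that the unique circuit is the full edge set — and the elementary fact that removing any single edge from $C_n$ leaves a path, which is acyclic. The rest is a one-line inclusion–exclusion (or direct subtraction).
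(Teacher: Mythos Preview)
Your argument is correct. Your primary route --- count all $k^n$ maps $E\to[k]$ and subtract the $k$ constant maps, since the unique circuit of $M(C_n)$ is the full edge set --- is more elementary than the paper's own proof, which instead works entirely inside the scheduling formalism: it identifies the solutions with ordered set partitions of $[n]$ having at least two blocks, writes $\mathcal{A}_{M(C_n)}(k)=\sum_{m=2}^{n} S(n,m)\,k(k-1)\cdots(k-m+1)$, and then invokes the Stirling identity $\sum_{m=0}^{n} S(n,m)\,k^{\underline{m}}=k^n$ to simplify to $k^n-k$. Your optional paragraph is exactly this computation, so you have in fact anticipated the paper's method as well. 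The direct subtraction buys brevity and avoids any appeal to Theorem~\ref{fbasis} or Stirling numbers; the paper's route buys an explicit illustration of how the general machinery (Equation~\eqref{eqn:fbasis} and the NCQSym expansion) specializes in a concrete case, which is presumably its expository purpose.
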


\begin{proof}
Let $M(C_n)$ be the graphical matroid of the cycle graph on $n$
vertices.  Independent sets of the cycle matroid correspond to acyclic
subsets of edges (forests) of the graph.  For $k \leq 1$ we find that there are no
independent covers of $M$.  For $k \geq 2$, independent covers
correspond to ordered set partitions of $[n]$ with at least $2$ parts.

For $n=3$,  consider the hyperplanes in the braid arrangement in $\mathbb{R}^3$, the ground set  of the
matroid, or edge set  of the graph.  The solutions of the arboricity scheduling problem consist of ordered set partitions
corresponding to collections of integer points which do not lie on the
intersection: $x_1 = x_2 = x_3$. Hence the corresponding nc-quasisymmetric function encoding the solutions is
$${\bf{{\mathcal A}}}(C_3) = \ncM_{(1|23)} + \ncM_{(2|13)} + \ncM_{(3|12)} + \ncM_{(23|1)} + \ncM_{(13|2)} +  \ncM_{(12|3)} + $$ $$ \ncM_{(1|2|3)} + \ncM_{(1|3|2)}
+ \ncM_{(2|1|3)} + \ncM_{(2|3|1)} + \ncM_{(3|1|2)} + \ncM_{(3|2|1)}. $$

Converting to the form given by Theorem~\ref{fbasis} gives the arboricity polynomial.  
In particular, each ordered set partition of length $\ell$ contributes the expression $k \choose \ell$.\footnote{For the familiar reader, the arboricity polynomial is the result of applying the type map to the elements of NCQSYM and then evaluating the resulting quasisymmetric function at ${\bf 1}^k$.}

$$ \A_{(C_3)} = 3 { k \choose 2} + 3 { k \choose 2} + 6 { k \choose 3} = k^3 - k. $$

In general for $C_n$, because there is only one cycle,  solutions are points ${\bf x}$ such that $\neg (x_1 = x_2 =\cdots =x_n)$.  Equivalently, solutions correspond to all ordered set partitions with at least two parts. Hence, 

$$\A_{(C_n)}(k) = \sum_{m=2}^{n} S(n,m) \cdot k(k-1) \cdots (k-m+1), $$
where $S(n,m)$ is the Stirling number of the second kind.   Simplifying the expression above, using the identity that $\sum_0^n S(n,m) \cdot k(k-1) \cdots (k-m+1) = k^n$, we find that the number of covers of the cycle graph with at most $k$ forests is

$${\A_{(C_n)}}(k) = k^n - k. $$

\end{proof}

\subsection{Non Contraction/Deletion}
\label{sec:condel}

Example~\ref{ex:c3} gives the arboricity polynomial for the cycle graph on $3$ vertices which already shows an instance of the  contraction deletion recursion failing.  Proposition~\ref{prop:cycles} examines the general cycle graph and gives an infinite family of examples of arboricity polynomials that do not satisfy contraction/deletion.   We stress that scheduling polynomials in general may or may not satisfy the recursion -- the chromatic polynomial is also an instance of scheduling.

Consider the graphical matroid of the $4$-cycle, $C_4$.  The number of independent covers of size at most $3$ is $3^4-3 = 78$.  Contracting any edge gives the $3$-cycle.   For the graphical matroid of the $3$-cycle, $C_3$, the number of independent covers of size at most $3$ is $3^3-3 = 24$.    Deleting any edge gives a path of length $3$ which is a free matroid and hence has $3^3 = 27$ covers of size at most $3$. 
\begin{align*}
\A_{(C_4)}(3) &= 78 \\
\A_{(C_4 - e)}(3) &= 24 \\
\A_{(C_4 / e)}(3)  &= 27.
\end{align*}

In general,
\begin{align*}
\A_{(C_n)}(k) &= k^n - k \\
\A_{(C_n - e)}(k) &= k^n \\
\A_{(C_n / e)}(k)  &= k^{n-1} - k.
\end{align*}

For $k \neq 2$, $$(k^n - k) \neq k^n \pm (k^{n-1} - k).$$

\subsection{Non Valuative}
\label{value}

The arboricity polynomial is not generally valuative with respect to the matroid base polytope.  The authors thank Christopher Eur for posing this question and working out the following example with the second author during a visit to Carnegie Mellon University.  

Let $\mathcal{U}_{24}$ be the uniform matroid of rank $2$ on $4$ elements. The matroid base polytope of $\mathcal{U}_{24}$ is the octahedron.  It decomposes into three pieces as in Figure~\ref{fig:valuation}: the top pyramid $\M_1$, middle slice $\M'$ and the bottom pyramid $\M_2$. The two pyramids are matroid base polytopes for isomorphic matroids.  To check if the arboricity polynomial is valuative on the matroid polytope of $\U_{24}$, we consider whether or not

$$\A_{{\U_{24}}}(t) = 2{\A_{\M_1}} (t) -\mathcal{A_{\mathcal{M'}}}(t).$$

In $\U_{24}$, all pairs of elements are independent and all triples of elements are circuits. The ordered set partitions corresponding to independent covers are all ordered set partitions of $[4]$ without blocks of size three or four.

$$\A_{\U_{24}}(t) = t^4-4t^2+3t. $$

$\M'$ has bases $\{13, 14, 23, 24\}$.  It is a graphical matroid for the graph with two sets of double edges identified at a vertex. Equivalently, it is the direct sum of two copies of $\U_{12}$.  The arboricity polynomial is
$$ \A_{\M'}(t) = (t^2 - t)^2 = t^4-2t^3+t^2.$$
Finally, the matroid $\M_1$ has bases $\{12, 13, 14, 23, 24\}$,

$$\A_{\M_1}(t) = t^4 - t^3-2t^2 + 2t.$$

Putting these together, 
$$ 2(t^4 - t^3-2t^2 + 2t) - (t^4-2t^3 + t^2) = t^4 - 5t^2 + 4t \neq t^4 - 4t^2 + 3t.$$

\begin{figure}
\centering
\includegraphics[width=0.7\linewidth]{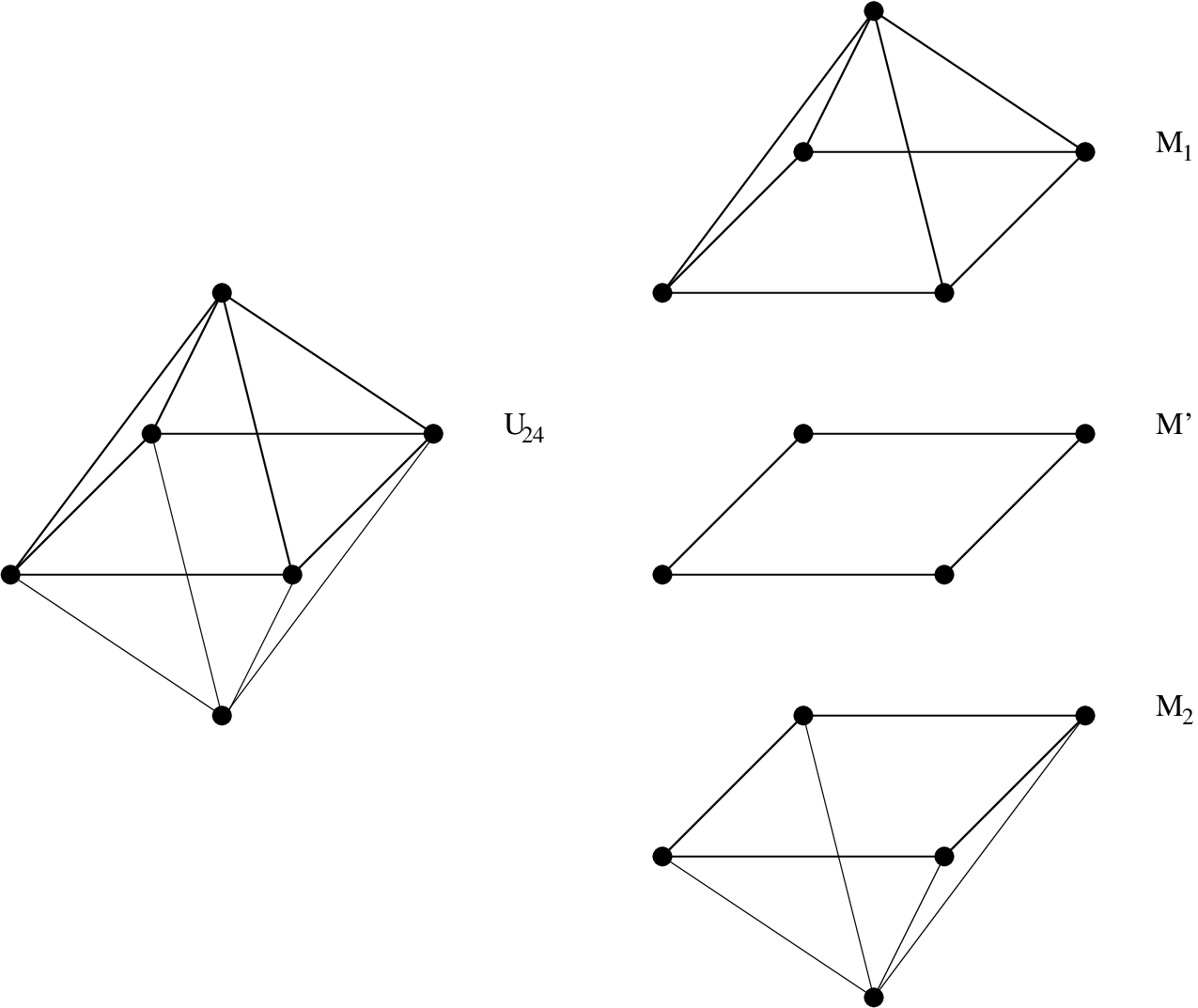}
\caption{Matroid base polytope of $U_{2,4}$}
 \label{fig:valuation}
    \end{figure}

\bibliographystyle{plain}
\bibliography{refs}

\end{document}